\documentclass[12pt]{amsart}
\usepackage{amssymb,amsmath,amsthm,mathrsfs,multirow,xcolor,framed,url}
\usepackage[pdftex,
         pdfauthor={Rong Chen},
         pdftitle={Two-colored generalized Frobenius partitions and minimal-excludant sums over bipartitions},
         pdfsubject={MATHEMATICS},
         pdfkeywords={generalized Frobenius partitions, bipartitions, minimal excludant, Durfee rectangles, staircase partitions},
         pdfproducer={Latex with hyperref},
         pdfcreator={pdflatex}]{hyperref}
\usepackage [latin1]{inputenc}
\newtheorem{theorem}{Theorem}[section]
\newtheorem{lemma}[theorem]{Lemma}
\newtheorem{cor}[theorem]{Corollary}

\theoremstyle{definition}
\newtheorem{definition}[theorem]{Definition}

\theoremstyle{remark}

\numberwithin{equation}{section}

\newcommand\nutwid{\overset {\text{\lower 3pt\hbox{$\sim$}}}\nu}

\allowdisplaybreaks

\newcommand\omycite[1]{}

\newcommand{\beqs}{\begin{equation*}}
\newcommand{\eeqs}{\end{equation*}}
\newcommand{\beq}{\begin{equation}}
\newcommand{\eeq}{\end{equation}}

\newcommand{\mex}{\operatorname{mex}}
\newcommand{\smex}{\sigma\operatorname{mex}}
\newcommand{\cpsi}{c\psi}
\newcommand{\CPsi}{C\Psi}

\title[Generalized Frobenius partitions and bipartition mex sums]{Two-colored generalized Frobenius partitions and minimal-excludant sums over bipartitions}

    \author{Rong Chen}
    \address{Department of Mathematics, Shanghai Normal University, Shanghai, People's Republic of China}
	\email{ rchen@shnu.edu.cn }

	\author{Kang-Yu Wang}
	\address{Department of Mathematics, Shanghai Normal University, Shanghai, People's Republic of China}
	\email{ 1000570883@smail.shnu.edu.cn }

	\author{Yi-Ning Wang}
	\address{Department of Mathematics, Shanghai Normal University, Shanghai, People's Republic of China}
	\email{ 1000570850@smail.shnu.edu.cn }

\begin{document}

\begin{abstract}
Let $\cpsi_{2,a}(n)$ denote the number of $(2,a)$-colored Frobenius partitions of weight $n$, where the two rows have prescribed length difference.  We study the two cases $a=0$ and $a=1$ and connect them with minimal-excludant statistics on bipartitions.  Let $\sigma\mex_2(n)$ be the sum of the Lin--Liu bipartition minimal excludants over all bipartitions of $n$, and let $E_2(n)$ be the number of bipartitions whose two component minimal excludants are equal.  For all $n\geq 0$, we give a combinatorial proof of
\[
\cpsi_{2,0}(n)=2\sigma\mex_2(n)
\qquad\text{and}\qquad
\cpsi_{2,1}(n)=2\sigma\mex_2(n)-E_2(n).
\]
These identities give direct combinatorial interpretations of two-colored Frobenius partition functions in terms of bipartition minimal-excludant sums.
\end{abstract}

\maketitle

\noindent
\textbf{Keywords.} Bipartitions; minimal excludant; generalized Frobenius partitions.

\noindent
\textbf{2020 MSC} 05A17; 05A19; 11P81.

\section{Introduction}

We use standard notation for partitions as in \cite{AndrewsPartitions}.  The minimal excludant is a basic statistic on integer partitions.  For a partition $\lambda$, the number $\mex(\lambda)$ is the smallest positive integer that is not a part of $\lambda$.  This statistic records the first gap in the initial sequence of possible parts.  Although the definition is elementary, minimal excludants have become a useful tool in the study of partition identities, congruences, and colored partition functions.

Andrews and Newman \cite{AndrewsNewman} studied the sum of minimal excludants over all partitions of a fixed integer.  Their work connected this sum with two-colored partitions and motivated further investigations of mex-type statistics.  Ballantine and Merca \cite{BallantineMercaCombinatorial} gave a combinatorial proof of the minimal-excludant theorem of Andrews and Newman, and in related work \cite{BallantineMerca} studied least $r$-gaps and identities involving triangular and polygonal numbers.  These results show that first-gap statistics are naturally linked with staircase partitions and classical partition functions.

Bipartitions provide a natural extension of this framework.  If $\pi=(\pi_1,\pi_2)$ is a bipartition of $n$, we write $\pi\vdash_2 n$ and denote by $p_2(n)$ the number of such bipartitions.  Lin and Liu \cite{LinLiu} defined the minimal excludant of $\pi$ by
\[
\mex_2(\pi)=\min\{\mex(\pi_1),\mex(\pi_2)\}.
\]
Equivalently, $\mex_2(\pi)$ is the smallest positive integer that does not occur simultaneously in both components.  The corresponding sum over all bipartitions of $n$ is a direct analogue of the Andrews--Newman minimal-excludant sum, but it also reflects the interaction between the two components of a bipartition.

The second class of objects considered here consists of generalized Frobenius partitions.  Andrews \cite{AndrewsGFP} introduced $k$-colored generalized Frobenius partitions by allowing the entries in Frobenius coordinates to carry colors.  Jiang, Rolen and Woodbury \cite{JiangRolenWoodbury} considered colored two-rowed arrays in which the row lengths may differ by a prescribed amount.  We denote the resulting counting functions by $\cpsi_{k,a}(n)$, following their notation.  In the two-colored case, the parameters $a=0$ and $a=1$ give the two functions studied in this paper.

Our goal is to connect these two settings.  We prove that the two-colored Frobenius-partition counts $\cpsi_{2,0}(n)$ and $\cpsi_{2,1}(n)$ are determined by the Lin--Liu statistic $\mex_2$, together with a simple correction term recording when the two component minimal excludants are equal.  More precisely, let $E_2(n)$ count the bipartitions $\pi=(\pi_1,\pi_2)$ of $n$ for which
\[
\mex(\pi_1)=\mex(\pi_2).
\]
The main identities are
\[
\cpsi_{2,0}(n)=2\sum_{\pi\vdash_2 n}\mex_2(\pi)
\]
and
\[
\cpsi_{2,1}(n)=2\sum_{\pi\vdash_2 n}\mex_2(\pi)-E_2(n),
\]
valid for all $n\geq 0$.  Thus the cases $a=0$ and $a=1$ in the two-colored Frobenius setting admit direct interpretations in terms of the bipartition minimal excludant and an equal-mex correction term.

The paper is organized as follows.  In Section 2, we recall the definitions of the colored Frobenius partitions and bipartition minimal excludants needed below.  In Section 3, we prove the Frobenius-partition identities and the corresponding minimal-excludant identities, and then combine them in the final corollary.

\section{Definitions from Frobenius partitions and minimal excludants}

In this section we recall the definitions used below, following Andrews \cite{AndrewsGFP}, Jiang--Rolen--Woodbury \cite{JiangRolenWoodbury}, Andrews--Newman \cite{AndrewsNewman}, and Lin--Liu \cite{LinLiu}.

\subsection{Generalized Frobenius partitions with prescribed row-length difference}

Ordinary Frobenius coordinates encode a partition as a two-rowed array
\[
\begin{pmatrix}
 a_1&a_2&\cdots&a_m\\
 b_1&b_2&\cdots&b_m
\end{pmatrix},
\]
where
\[
a_1>a_2>\cdots>a_m\geq 0,
\qquad
b_1>b_2>\cdots>b_m\geq 0,
\]
and the weight is
\[
m+\sum_{i=1}^m a_i+\sum_{i=1}^m b_i.
\]
Andrews \cite{AndrewsGFP} generalized this construction by allowing the entries in the two rows to be selected from colored copies of the non-negative integers.  The usual $k$-colored generalized Frobenius partitions are obtained by taking two rows of the same length and selecting entries from $k$ colored copies of the non-negative integers; we denote the number of such objects of weight $n$ by $c\phi_k(n)$ and write
\[
C\Phi_k(q):=\sum_{n\geq 0}c\phi_k(n)q^n.
\]

We now recall the definition of Jiang, Rolen and Woodbury \cite[Definition 1.5]{JiangRolenWoodbury} needed in this paper.  A colored integer is regarded as a pair $(m,j)$, where $m$ is a non-negative integer and $j$ is a color.  The lexicographic order is taken with respect to the usual order on $m$ and the fixed color order $0<1<\cdots<k-1$ on $j$.

\begin{definition}\label{def:kaF}
Let $k\in\mathbb Z_{>0}$ and let $a\in \mathbb Z+k/2$.  A $(k,a)$-colored Frobenius partition is a two-rowed array
\[
\Pi=
\begin{pmatrix}
 a_1&a_2&\cdots&a_r\\
 b_1&b_2&\cdots&b_s
\end{pmatrix}
\]
satisfying the following conditions:
\begin{enumerate}
\item each entry belongs to one of $k$ colored copies of the non-negative integers;
\item each row is strictly decreasing with respect to the lexicographic ordering on colored integers;
\item the row lengths satisfy
\[
r-s=a-\frac{k}{2}.
\]
\end{enumerate}
The weight of $\Pi$ is
\[
|\Pi|=r+\sum_{i=1}^r a_i+\sum_{j=1}^s b_j,
\]
where the colors are ignored in taking the numerical values of the entries.  We denote by $\cpsi_{k,a}(n)$ the number of such arrays of weight $n$ and write
\[
\CPsi_{k,a}(q):=\sum_{n\geq 0}\cpsi_{k,a}(n)q^n.
\]
\end{definition}

For $a=k/2$, the length condition gives $r=s$, and Definition \ref{def:kaF} recovers the usual $k$-colored generalized Frobenius partitions counted by $c\phi_k(n)$.  In this paper we mainly need the cases $(k,a)=(2,0)$ and $(k,a)=(2,1)$.  Thus a $(2,0)$-colored Frobenius partition is a two-rowed array with two colors, say $0$ and $1$, whose row lengths satisfy
\[
r-s=-1.
\]
Similarly, a $(2,1)$-colored Frobenius partition has two colors and row lengths satisfying
\[
r-s=0.
\]

\subsection{Minimal excludants for partitions and bipartitions}

By a partition of $n$ we mean a finite weakly decreasing sequence of positive integers whose sum is $n$; we write $\lambda\vdash n$ and denote its weight by $|\lambda|$.  Let $p(n)$ denote the number of partitions of $n$.  We use the convention that $p(m)=0$ for $m<0$.  If $\lambda$ and $\mu$ are partitions, then $\lambda\cup\mu$ denotes the partition obtained by taking the multiset union of their parts and arranging the parts in weakly decreasing order.

Following Andrews and Newman \cite{AndrewsNewman}, the minimal excludant of a partition $\lambda$ is
\[
\mex(\lambda):=\min\{m\geq 1:m\text{ is not a part of }\lambda\}.
\]
They defined
\[
\smex(n):=\sum_{\lambda\vdash n}\mex(\lambda).
\]

A bipartition of $n$ is an ordered pair $\pi=(\pi_1,\pi_2)$ of partitions with $|\pi_1|+|\pi_2|=n$; we write $\pi\vdash_2 n$.  Let $p_2(n)$ denote the number of bipartitions of $n$, and we use the convention that $p_2(m)=0$ for $m<0$.  Lin and Liu \cite{LinLiu} defined the minimal excludant of a bipartition by
\[
\mex_2(\pi):=\min\{\mex(\pi_1),\mex(\pi_2)\}.
\]
They also defined the corresponding sum
\[
\sigma\mex_2(n):=\sum_{\pi\vdash_2 n}\mex_2(\pi).
\]

\begin{definition}\label{def.equal.mex2}
For $n\geq 0$, let $E_2(n)$ denote the number of bipartitions $\pi=(\pi_1,\pi_2)$ of $n$ whose two component minimal excludants are equal; that is,
\[
E_2(n):=
\#\{\pi=(\pi_1,\pi_2)\vdash_2 n:\mex(\pi_1)=\mex(\pi_2)\}.
\]
\end{definition}

\section{Proofs of the main identities}

\subsection{The Frobenius-partition side}

We first record the one-color case of Definition \ref{def:kaF}.

\begin{lemma}\label{lemma.durfee}
Let $y\in\mathbb Z$ and $N\geq 0$.  Then
\[
\cpsi_{1,\frac12+y}(N)=p\left(N-\frac{y(y+1)}2\right),
\]
where $p(m)=0$ for $m<0$.
\end{lemma}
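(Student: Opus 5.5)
With one color the lexicographic order is just the usual order, so a $(1,\frac12+y)$-colored Frobenius partition is a pair of strictly decreasing sequences of non-negative integers, $a_1>\cdots>a_r\ge 0$ and $b_1>\cdots>b_s\ge 0$, with $r-s=y$ and weight $r+\sum a_i+\sum b_j$. The plan is to build a bijection with partitions of $N-y(y+1)/2$ from a Durfee-rectangle decomposition; this explains the lemma's label. A generating-function check via the Jacobi triple product can run alongside it.

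\textbf{Generating-function route.} The generating function of all such pairs, with $z$ recording $r-s$, is
\[
\prod_{n\ge 0}(1+zq^{n+1})(1+z^{-1}q^{n}).
\]
By the Jacobi triple product this equals
\[
\frac{1}{(q;q)_\infty}\sum_{y\in\mathbb Z} z^{y}q^{y(y+1)/2}.
\]
The normalization is checked from the exponents: the first row carries $q^{a_i+1}$ and the second row $q^{b_j}$. Extracting the coefficient of $z^y$ gives $\CPsi_{1,\frac12+y}(q)=q^{y(y+1)/2}/(q;q)_\infty$, which is the claim.

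\textbf{Bijective route.}
\begin{enumerate}
\item \emph{The case $y\ge 0$.} Let $s$ be the length of the bottom row. Form the partition whose Durfee rectangle has size $s\times(s+y)$, or equivalently shift the standard Frobenius construction so that the diagonal is replaced by the line $j-i=y$.
\item \emph{Pairing.} Pair $a_{y+i}$ with $b_i$ for $1\le i\le s$. Each pair gives a row of length $a_{y+i}+y+i$ to the right of the shifted diagonal and a column of length $b_i+i$ below it.
\item \emph{The unpaired entries.} The $y$ unpaired entries $a_1>\cdots>a_y$ sit in the top rows. Subtracting the minimal values $y-1,\dots,0$ from them is where the triangular number $y(y+1)/2$ appears, after also accounting for the $+1$ per entry of the first row.
\item \emph{The case $y<0$.} Apply the analogous construction with the roles of the two rows exchanged.
\end{enumerate}

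\textbf{Main obstacle.} Getting the bookkeeping exactly right is the delicate step: the bijection must land precisely on partitions of $N-y(y+1)/2$, with consistent handling of the $+r$ term in both signs of $y$. Because of this, I would present the triple-product computation as the proof and mention the Durfee-rectangle bijection as its combinatorial interpretation.
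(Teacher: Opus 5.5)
Your triple-product argument is correct and complete. $\prod_{n\ge0}(1+zq^{n+1})(1+z^{-1}q^{n})$ is the right generating function, since the $+r$ in the weight is exactly the shift to $q^{a_i+1}$ on the top row. Extracting $[z^y]$ from Jacobi's identity then gives $\CPsi_{1,\frac12+y}(q)=q^{y(y+1)/2}/(q;q)_\infty$.

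The paper instead proves the lemma bijectively, and more simply than your sketch. It subtracts the staircase $r-i$ from $a_i$ and $s-j$ from $b_j$, separately in each row, and glues the resulting partitions to the right of and below an $r\times s$ rectangle. Nothing is paired across the rows, so there is no case split on the sign of $y$. The bookkeeping you flagged reduces to $r+\binom{r}{2}+\binom{s}{2}-rs=(r-s)(r-s+1)/2$ for all $r,s\ge0$.

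Each route has its advantages. Yours is shorter and avoids checking that the Durfee rectangle with $r-s=y$ is uniquely determined by $\lambda$, which the paper just calls standard. But it imports Jacobi's identity, which is equivalent to the lemma summed over $y$ against $z^y$. The paper's bijection is essentially a combinatorial proof of the identity you invoke. Its inverse is also reused color by color in the proof of Theorem~\ref{thm.cpsi.p2}, in line with the paper's aim of a combinatorial proof.

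The bijective sketch you would append is not right as written. With diagonal cells on $j-i=y$, arms $a_{y+i}$ and legs $b_i$, the leftover cells form the first $y$ columns. Their lengths are forced to be at least $1+b_1$, which the unpaired $a_1>\dots>a_y$ cannot control. For example, with $y=1$, top row $(1,0)$ and bottom row $(5)$, the hook in column $2$ forces at least $12$ cells, but the target size is $7$. The working orientation uses the line $j-i=-y$. Take the first $y$ rows to be $\lambda_i=a_i-(y-i)$, and let the rest be the partition with ordinary Frobenius symbol $(a_{y+1},\dots,a_{y+s}\mid b_1,\dots,b_s)$.
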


\begin{proof}
Let
\[
F=
\begin{pmatrix}
 a_1&a_2&\cdots&a_r\\
 b_1&b_2&\cdots&b_s
\end{pmatrix}
\]
be a $(1,\frac12+y)$-colored Frobenius partition of weight $N$.  Since there is only one color, this is a one-color Frobenius symbol with $r-s=y$, where
\[
a_1>a_2>\cdots>a_r\geq 0,
\qquad
b_1>b_2>\cdots>b_s\geq 0,
\]
and
\[
N=r+\sum_{i=1}^{r}a_i+\sum_{j=1}^{s}b_j.
\]
Remove the staircases from the two rows by defining
\[
\alpha_i=a_i-(r-i)\qquad(1\leq i\leq r),
\]
and
\[
\beta_j=b_j-(s-j)\qquad(1\leq j\leq s).
\]
Then
\[
\alpha_1\geq\alpha_2\geq\cdots\geq\alpha_r\geq 0,
\qquad
\beta_1\geq\beta_2\geq\cdots\geq\beta_s\geq 0.
\]

Construct a Ferrers diagram by drawing an $r\times s$ Durfee rectangle, attaching $\alpha_i$ boxes to the right of row $i$ for $1\leq i\leq r$, and attaching $\beta_j$ boxes below column $j$ for $1\leq j\leq s$.  This gives an ordinary partition $\lambda$ with
\[
|\lambda|=rs+\sum_{i=1}^{r}\alpha_i+\sum_{j=1}^{s}\beta_j.
\]
On the other hand,
\[
\sum_{i=1}^{r}a_i=
\sum_{i=1}^{r}\alpha_i+\frac{r(r-1)}2,
\]
and
\[
\sum_{j=1}^{s}b_j=
\sum_{j=1}^{s}\beta_j+\frac{s(s-1)}2.
\]
Therefore
\begin{align*}
N
&=r+\sum_{i=1}^{r}a_i+\sum_{j=1}^{s}b_j\\
&=|\lambda|+\frac{(r-s)(r-s+1)}2\\
&=|\lambda|+\frac{y(y+1)}2.
\end{align*}
Thus $F$ gives a partition of $N-y(y+1)/2$.

The construction is reversible by the standard Durfee-rectangle decomposition: take the Durfee rectangle with height $r$ and width $s$ satisfying $r-s=y$.  The boxes to the right of the rectangle define the partition $(\alpha_i)$, and the boxes below the rectangle define the partition $(\beta_j)$.  Adding back the staircases,
\[
a_i=\alpha_i+(r-i),
\qquad
b_j=\beta_j+(s-j),
\]
recovers a unique one-color Frobenius symbol of weight $N$ and length difference $y$.
\end{proof}

We now express $\cpsi_{2,0}(n)$ and $\cpsi_{2,1}(n)$ in terms of the bipartition function.

\begin{theorem}\label{thm.cpsi.p2}
For all $n\geq 0$, we have
\[
\cpsi_{2,0}(n)=\sum_{d\in\mathbb Z}p_2\bigl(n-d(d+1)\bigr)
\]
and
\[
\cpsi_{2,1}(n)=\sum_{d\in\mathbb Z}p_2(n-d^2).
\]
\end{theorem}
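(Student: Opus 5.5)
The plan is to split each two-colored array according to colors and reduce to Lemma \ref{lemma.durfee}.

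Let $\Pi$ be a $(2,a)$-colored Frobenius partition with rows $(a_1,\dots,a_r)$ and $(b_1,\dots,b_s)$. Each row is strictly decreasing in the lexicographic order on pairs $(m,j)$. Therefore, within one color the numerical values are strictly decreasing, and the row is determined by the set of color-$0$ entries together with the set of color-$1$ entries. Conversely, any such pair of strictly decreasing lists merges into exactly one lexicographically decreasing row.

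So $\Pi$ corresponds bijectively to two one-color Frobenius symbols:
\[
F_0=\begin{pmatrix} a^{(0)}\\ b^{(0)}\end{pmatrix},\qquad F_1=\begin{pmatrix} a^{(1)}\\ b^{(1)}\end{pmatrix}.
\]
Here $F_j$ collects the color-$j$ entries of the top and bottom rows. Their row lengths are $r_0,s_0$ and $r_1,s_1$, with $r_0+r_1=r$ and $s_0+s_1=s$. The weight splits additively because $r=r_0+r_1$, so $|\Pi|=|F_0|+|F_1|$, where $|F_j|=r_j+\sum a^{(j)}+\sum b^{(j)}$. Setting $y_j=r_j-s_j$, the only constraint is
\[
y_0+y_1=r-s=a-1.
\]
Each $F_j$ is an arbitrary $(1,\tfrac12+y_j)$-colored Frobenius partition.

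Summing over the splitting and applying Lemma \ref{lemma.durfee} to each factor gives
\[
\cpsi_{2,a}(n)=\sum_{y_0+y_1=a-1}\ \sum_{N_0+N_1=n} p\Bigl(N_0-\tfrac{y_0(y_0+1)}2\Bigr)\,p\Bigl(N_1-\tfrac{y_1(y_1+1)}2\Bigr).
\]
Since $p_2$ is the convolution $p*p$, the inner sum equals
\[
p_2\Bigl(n-\tfrac{y_0(y_0+1)}2-\tfrac{y_1(y_1+1)}2\Bigr).
\]

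It remains to compute the shift $T(y_0,y_1)=\tfrac{y_0(y_0+1)}2+\tfrac{y_1(y_1+1)}2$ in each case.

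\textbf{Case $a=0$.} Here $y_1=-1-y_0$. Write $d=y_0$. Then
\[
\tfrac{y_1(y_1+1)}2=\tfrac{(-1-d)(-d)}2=\tfrac{d(d+1)}2,
\]
so $T=d(d+1)$. As $d$ runs over $\mathbb Z$, this gives the first identity.

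\textbf{Case $a=1$.} Here $y_1=-y_0$. Write $d=y_0$. Then
\[
T=\tfrac{d(d+1)}2+\tfrac{d(d-1)}2=d^2,
\]
which gives the second identity.

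The main point needing care is the bijection in the first step. I must check that the lexicographic order (value first, then color) forces each row to be a shuffle of two strictly decreasing single-color rows. I must also check that the length-difference condition imposes only the constraint on $y_0+y_1$ and no restriction on the individual $y_j$. This holds because each $r_j,s_j\ge 0$ is otherwise free.

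Finally, the sums are finite: for fixed $n$, all but finitely many $d$ give a negative argument, and those terms vanish by the convention $p_2(m)=0$ for $m<0$. The remaining steps are bookkeeping.
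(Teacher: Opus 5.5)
Your proposal is correct and takes the same route as the paper. You split the array by color into two one-color Frobenius symbols with $y_0+y_1=a-1$, apply Lemma \ref{lemma.durfee} to each, and evaluate the shift $\frac{y_0(y_0+1)}{2}+\frac{y_1(y_1+1)}{2}$ as $d(d+1)$ or $d^2$. The only difference is cosmetic: you combine the two factors through the convolution $p_2=p*p$, whereas the paper explicitly forms the bipartition $(\lambda_0,\lambda_1)$.
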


\begin{proof}
Let $a\in\{0,1\}$, and let $\Pi$ be a $(2,a)$-colored Frobenius partition of weight $n$.  For $i\in\{0,1\}$, let $r_i$ be the number of entries of color $i$ in the top row, and let $s_i$ be the number of entries of color $i$ in the bottom row.  Since $\Pi$ is a $(2,a)$-colored Frobenius partition, its total row lengths satisfy
\[
(r_0+r_1)-(s_0+s_1)=a-1.
\]
Put
\[
d_i=r_i-s_i\qquad(i=0,1).
\]
Then
\[
d_0+d_1=a-1.
\]

For each color $i$, we extract the subarray consisting only of entries of color $i$, and then forget the color.  This gives a one-color Frobenius symbol of length difference $d_i$.  By Lemma \ref{lemma.durfee}, this subarray corresponds to an ordinary partition $\lambda_i$, with weight decreased by
\[
\frac{d_i(d_i+1)}2.
\]
Therefore $\Pi$ corresponds to a pair of ordinary partitions $(\lambda_0,\lambda_1)$ whose total weight is
\[
|\lambda_0|+|\lambda_1|
=
 n-\frac{d_0(d_0+1)}2-\frac{d_1(d_1+1)}2.
\]
Writing $d=d_0$ and $d_1=a-1-d$, this total weight is
\[
n-\frac{d(d+1)}2-\frac{(a-1-d)(a-d)}2.
\]

Conversely, given an integer $d$ and a bipartition $(\lambda_0,\lambda_1)$ of
\[
n-\frac{d(d+1)}2-\frac{(a-1-d)(a-d)}2,
\]
apply the inverse of the Durfee rectangle construction in Lemma \ref{lemma.durfee} to $\lambda_0$ with length difference $d$ and to $\lambda_1$ with length difference $a-1-d$.  Coloring the resulting arrays by $0$ and $1$, and then merging the entries in each row according to the lexicographic order on colored integers, recovers a unique $(2,a)$-colored Frobenius partition of weight $n$.

If $a=0$, then
\[
\frac{d(d+1)}2+\frac{(-1-d)(-d)}2=d(d+1),
\]
and hence
\[
\cpsi_{2,0}(n)=\sum_{d\in\mathbb Z}p_2\bigl(n-d(d+1)\bigr).
\]
If $a=1$, then
\[
\frac{d(d+1)}2+\frac{(-d)(1-d)}2=d^2,
\]
and hence
\[
\cpsi_{2,1}(n)=\sum_{d\in\mathbb Z}p_2(n-d^2).
\]
\end{proof}

\subsection{The minimal excludant side}

Next we give the corresponding minimal-excludant interpretations of the two $p_2$-sums in Theorem \ref{thm.cpsi.p2}.  The first is the bipartition analogue of the staircase argument for least gaps, directly parallel to the proof of Ballantine and Merca \cite[Theorem 1.1]{BallantineMerca} in the case $r=1$.

\begin{theorem}\label{thm.sigma.p2}
For all $n\geq 0$, we have
\[
2\sigma\mex_2(n)=\sum_{d\in\mathbb Z}p_2\bigl(n-d(d+1)\bigr)
\]
and
\[
2\sigma\mex_2(n)-E_2(n)=\sum_{d\in\mathbb Z}p_2(n-d^2).
\]
\end{theorem}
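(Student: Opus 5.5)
Write $\sigma\mex_2(n)=\sum_{\pi\vdash_2 n}\sum_{k\ge 1}[\mex_2(\pi)\ge k]$. The condition $\mex_2(\pi)\ge k$ means $\mex(\pi_1)\ge k$ and $\mex(\pi_2)\ge k$. Equivalently, both $\pi_1$ and $\pi_2$ contain every part $1,2,\dots,k-1$. Removing one copy of the staircase $(k-1,\dots,2,1)$ from each component is a bijection onto all bipartitions of $n-k(k-1)$. Hence
\[
\sigma\mex_2(n)=\sum_{k\ge 1}p_2\bigl(n-k(k-1)\bigr)=\sum_{j\ge 0}p_2\bigl(n-j(j+1)\bigr).
\]
The map $d\mapsto -1-d$ exchanges the terms $d\ge 0$ and $d\le -1$ of $\sum_{d\in\mathbb Z}p_2(n-d(d+1))$ and preserves $d(d+1)$. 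That sum therefore equals $2\sigma\mex_2(n)$, which is the first identity.

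For the second identity we use the same layer-cake idea with a finer count. Let $m_i=\mex(\pi_i)$. Then
\[
2\mex_2(\pi)-[m_1=m_2]=\#\{k\ge1: m_1\ge k,\ m_2\ge k\}+\#\{k\ge 1: m_1\ge k+1,\ m_2\ge k\}+\dots
\]
We want to split this into two indicator sums. If $m=\min(m_1,m_2)$, then $\#\{k: m_1\ge k, m_2\ge k\}=m$, while $\#\{k\ge1:m_1\ge k+1, m_2\ge k\}$ equals $m$ if $m_1>m_2$ and $m-1$ if $m_1\le m_2$. The plain sum $m+\#\{k: m_1\ge k+1,m_2\ge k\}$ therefore gives $2m$ or $2m-1$, and it is asymmetric. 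The fix is to symmetrize. Use the two counts $\#\{k\ge1:m_1\ge k+1,m_2\ge k\}$ and $\#\{k\ge1: m_1\ge k,m_2\ge k+1\}$ together with the diagonal count $\#\{k:m_1\ge k,m_2\ge k\}$. A case check ($m_1<m_2$, $m_1=m_2$, $m_1>m_2$) gives
\[
\#\{k: m_1\ge k,m_2\ge k\}+\#\{k:m_1\ge k+1,m_2\ge k\}+\#\{k: m_1\ge k, m_2\ge k+1\}=3m-1+[m_1\ne m_2]=3m-[m_1=m_2].
\]
Subtracting the first identity's count $m$ leaves $2m-[m_1=m_2]$, as desired. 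So
\[
2\sigma\mex_2(n)-E_2(n)=\sigma\mex_2(n)+A(n)+B(n)-\sigma\mex_2(n),
\]
that is, $2\sigma\mex_2(n)-E_2(n)=A(n)+B(n)$. Here $A(n)$ counts pairs $(\pi,k)$ with $\pi_1\supseteq\{1,\dots,k\}$ and $\pi_2\supseteq\{1,\dots,k-1\}$, and $B(n)$ is the same with the roles of $\pi_1$ and $\pi_2$ swapped.

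Staircase removal again evaluates these counts. In $A(n)$ we delete $(k,\dots,1)$ from $\pi_1$ and $(k-1,\dots,1)$ from $\pi_2$, a total weight of $k(k+1)/2+k(k-1)/2=k^2$. Hence $A(n)=B(n)=\sum_{k\ge1}p_2(n-k^2)$, so $A(n)+B(n)=\sum_{d\ne0}p_2(n-d^2)$. This misses the $d=0$ term $p_2(n)$. So the indexing needs adjusting: I will instead let $k$ start at $0$ in one of $A,B$, which adds exactly $p_2(n)$. The matching indicator in the case check then becomes $[m_1\ge k+1, m_2\ge k]$ for $k\ge 0$, contributing $m$ if $m_1>m_2$ and $m$ if $m_1\le m_2$ (since $k=0$ always counts). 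Redoing the case analysis with this adjusted indicator is the main bookkeeping obstacle. I expect it to come out cleanly as
\[
\#\{k\ge0:m_1\ge k+1,m_2\ge k\}+\#\{k\ge1:m_1\ge k,m_2\ge k+1\}=2m-[m_1=m_2].
\]
Check: if $m_1=m_2=m$, the terms are $m$ and $m-1$; if $m_1<m_2$, they are $m$ and $m$; if $m_1>m_2$, they are $m$ and $m-1$. This fails in the last case, so I will instead pair $\#\{k\ge0:m_1\ge k+1,m_2\ge k\}$ (value $m_2$-dependent) with its mirror $\#\{k\ge1:m_2\ge k+1,m_1\ge k\}$. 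I will verify directly that the sum of these two equals $2m-[m_1=m_2]$ in all three cases, and then apply staircase removal to each, giving $\sum_{k\ge0}p_2(n-k^2)+\sum_{k\ge1}p_2(n-k^2)=\sum_{d\in\mathbb Z}p_2(n-d^2)$.
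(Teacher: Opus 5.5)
Your proof of the first identity is correct and is essentially the paper's argument. The paper places each $\pi$ in $A_{n,d}=\{\mex_2(\pi)>\ell(d)\}$ for exactly $2\mex_2(\pi)$ values of $d$. You instead evaluate the half-sum over $d\ge 0$ by the layer-cake formula and double it via $d\mapsto -1-d$.

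For the second identity, the pairing you finally settle on is
$\#\{k\ge 0: m_1\ge k+1,\ m_2\ge k\}+\#\{k\ge 1: m_1\ge k,\ m_2\ge k+1\}$.
This is exactly the paper's decomposition: the $k=0$ term is the $d=0$ term, and the others are $B^+_{n,k}$ and $B^-_{n,k}$. It does equal $2m-[m_1=m_2]$. But your proposal does not establish this. Its case checks contain arithmetic errors, it declares this very decomposition false, and the proposed ``fix'' is identical to it.

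\begin{itemize}
\item The two off-diagonal counts are $\#\{k\ge1: m_1\ge k+1,\ m_2\ge k\}=\min\{m_1-1,m_2\}$ and $\#\{k\ge1: m_1\ge k,\ m_2\ge k+1\}=\min\{m_1,m_2-1\}$. So your symmetrized three-term count is $3m-1-[m_1=m_2]$, not $3m-[m_1=m_2]$; for $m_1=m_2=1$ it equals $1$, not $2$.
\item That off-by-one is why $A(n)+B(n)$ came out as $\sum_{d\ne 0}p_2(n-d^2)$. In fact $A(n)+B(n)=2\sigma\mex_2(n)-E_2(n)-p_2(n)$, so adding $p_2(n)$ to both sides would already have finished the proof.
\item With the $k=0$ term included, $\#\{k\ge0: m_1\ge k+1,\ m_2\ge k\}=1+\min\{m_1-1,m_2\}$. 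When $m_1>m_2$ this is $m+1$, not $m$. So in that case the pair gives $(m+1)+(m-1)=2m$, and nothing fails.
\item Your ``mirror'' $\#\{k\ge1: m_2\ge k+1,\ m_1\ge k\}$ is literally the same set as $\#\{k\ge1: m_1\ge k,\ m_2\ge k+1\}$. The verification you defer is therefore precisely the one your own computation claimed to refute.
\end{itemize}

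To repair the argument, replace the case analysis by the single computation $1+\min\{m_1-1,m_2\}+\min\{m_1,m_2-1\}$, as in the paper. This equals $2m-1$ if $m_1=m_2$ and $2m$ otherwise. Your staircase removals then give $\sum_{d\in\mathbb Z}p_2(n-d^2)$, where the $k=0$ term removes nothing and contributes $p_2(n)$.
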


\begin{proof}
Let
\[
\delta(k)=(k,k-1,\ldots,2,1)
\]
be the staircase partition with largest part $k$, and put $\delta(0)=\emptyset$.  Then
\[
|\delta(k)|=1+2+\cdots+k=\frac{k(k+1)}2.
\]
For $N\in\mathbb Z$, let $\mathcal B(N)$ denote the set of bipartitions of $N$, with the convention that $\mathcal B(N)=\emptyset$ if $N<0$.  Thus $|\mathcal B(N)|=p_2(N)$.

We first prove the identity involving $\sigma\mex_2(n)$.  For $d\in\mathbb Z$, define
\[
\ell(d)=
\begin{cases}
d,&d\geq 0,\\
-d-1,&d<0.
\end{cases}
\]
Then
\[
d(d+1)=\ell(d)(\ell(d)+1)=2|\delta(\ell(d))|.
\]
For fixed $n$ and $d$, define
\[
\varphi_{n,d}:\mathcal B\bigl(n-d(d+1)\bigr)\longrightarrow \mathcal B(n)
\]
by
\[
\varphi_{n,d}(\mu_1,\mu_2)=
(\mu_1\cup\delta(\ell(d)),\mu_2\cup\delta(\ell(d))).
\]
This map adds one copy of each of the parts $1,2,\ldots,\ell(d)$ to both components.  Hence the image consists exactly of those bipartitions $\pi=(\pi_1,\pi_2)$ of $n$ for which each of the parts $1,2,\ldots,\ell(d)$ occurs in both $\pi_1$ and $\pi_2$.  Equivalently,
\[
\operatorname{Im}(\varphi_{n,d})
=A_{n,d}:=
\{\pi=(\pi_1,\pi_2)\vdash_2 n: \mex_2(\pi)>\ell(d)\}.
\]
The inverse map on $A_{n,d}$ is obtained by removing one copy of each of the parts $1,2,\ldots,\ell(d)$ from both components.  Thus $\varphi_{n,d}$ is a bijection from $\mathcal B(n-d(d+1))$ onto $A_{n,d}$, and therefore
\[
|A_{n,d}|=p_2\bigl(n-d(d+1)\bigr).
\]
Consequently,
\[
\sum_{d\in\mathbb Z}p_2\bigl(n-d(d+1)\bigr)
=
\sum_{d\in\mathbb Z}|A_{n,d}|.
\]
The right-hand side counts pairs $(d,\pi)$ such that $d\in\mathbb Z$, $\pi\vdash_2 n$, and $\pi\in A_{n,d}$.  We may count the same pairs by fixing $\pi$ first.  If $\mex_2(\pi)=m$, then $\pi\in A_{n,d}$ precisely when $\ell(d)<m$.  There are exactly $2m$ integers $d$ satisfying this condition, namely
\[
d=0,1,\ldots,m-1
\qquad\text{and}\qquad
 d=-1,-2,\ldots,-m.
\]
Thus each bipartition $\pi$ contributes $2\mex_2(\pi)$ to the sum.  Hence
\[
\sum_{d\in\mathbb Z}p_2\bigl(n-d(d+1)\bigr)
=
\sum_{\pi\vdash_2 n}2\mex_2(\pi)
=
2\sigma\mex_2(n).
\]

We now prove the identity involving $E_2(n)$.  The term $d=0$ in
\[
\sum_{d\in\mathbb Z}p_2(n-d^2)
\]
counts all bipartitions of $n$.  For $k\geq 1$, the term $d=k$ is represented by the map
\[
\psi^+_{n,k}:\mathcal B(n-k^2)\longrightarrow \mathcal B(n),
\]
where
\[
\psi^+_{n,k}(\mu_1,\mu_2)
=
(\mu_1\cup\delta(k),\mu_2\cup\delta(k-1)),
\]
and the term $d=-k$ is represented by the map
\[
\psi^-_{n,k}:\mathcal B(n-k^2)\longrightarrow \mathcal B(n),
\]
where
\[
\psi^-_{n,k}(\mu_1,\mu_2)
=
(\mu_1\cup\delta(k-1),\mu_2\cup\delta(k)).
\]
Both maps increase the weight by $k^2$, since
\[
|\delta(k)|+|\delta(k-1)|
=
\frac{k(k+1)}2+\frac{k(k-1)}2
=k^2.
\]
The image of $\psi^+_{n,k}$ is
\[
B^+_{n,k}
=
\{\pi=(\pi_1,\pi_2)\vdash_2 n:
\mex(\pi_1)>k,\ \mex(\pi_2)\geq k\},
\]
because one removes one copy of each of $1,2,\ldots,k$ from the first component and one copy of each of $1,2,\ldots,k-1$ from the second component to recover the preimage.  Similarly, the image of $\psi^-_{n,k}$ is
\[
B^-_{n,k}
=
\{\pi=(\pi_1,\pi_2)\vdash_2 n:
\mex(\pi_1)\geq k,\ \mex(\pi_2)>k\}.
\]
Thus $\psi^+_{n,k}$ and $\psi^-_{n,k}$ are bijections from $\mathcal B(n-k^2)$ onto $B^+_{n,k}$ and $B^-_{n,k}$, respectively.  It follows that
\[
\sum_{d\in\mathbb Z}p_2(n-d^2)
=
|\mathcal B(n)|+
\sum_{k\geq 1}|B^+_{n,k}|+
\sum_{k\geq 1}|B^-_{n,k}|.
\]
This identity is the point at which the bijections convert the $p_2$-sum into a weighted count of bipartitions of $n$: the left-hand side counts preimages in the various sets $\mathcal B(n-d^2)$, while the right-hand side counts their images in $\mathcal B(n)$, with multiplicity according to how many of the above image sets contain a fixed bipartition.

Fix a bipartition $\pi=(\pi_1,\pi_2)$ of $n$, and put
\[
m_1=\mex(\pi_1),
\qquad
m_2=\mex(\pi_2).
\]
The bipartition $\pi$ is counted once by the term $d=0$.  For $k\geq 1$, it is counted by $B^+_{n,k}$ precisely when
\[
m_1>k,
\qquad
m_2\geq k,
\]
and it is counted by $B^-_{n,k}$ precisely when
\[
m_1\geq k,
\qquad
m_2>k.
\]
Therefore the total multiplicity of $\pi$ in the above sum is
\[
1+\#\{k\geq 1:m_1>k,\ m_2\geq k\}
 +\#\{k\geq 1:m_1\geq k,\ m_2>k\}.
\]
The two cardinalities are
\[
\#\{k\geq 1:m_1>k,\ m_2\geq k\}=\min{m_1-1,m_2}
\]
and
\[
\#\{k\geq 1:m_1\geq k,\ m_2>k\}=\min{m_1,m_2-1}.
\]
Thus the total multiplicity is
\[
1+\min{m_1-1,m_2}+\min{m_1,m_2-1}.
\]
If $m_1=m_2=m$, this is $2m-1$.  If $m_1\neq m_2$, this is $2\min\{m_1,m_2\}$.  Since
\[
\mex_2(\pi)=\min\{m_1,m_2\},
\]
the multiplicity can be written uniformly as
\[
2\mex_2(\pi)-\mathbf{1}_{\{\mex(\pi_1)=\mex(\pi_2)\}},
\]
where $\mathbf{1}_E$ is $1$ if the condition $E$ holds and is $0$ otherwise.  Summing this multiplicity over all bipartitions $\pi$ of $n$ gives
\[
\sum_{d\in\mathbb Z}p_2(n-d^2)
=
\sum_{\pi\vdash_2 n}
\left(2\mex_2(\pi)-\mathbf{1}_{\{\mex(\pi_1)=\mex(\pi_2)\}}\right).
\]
By the definitions of $\sigma\mex_2(n)$ and $E_2(n)$, the last expression is
\[
2\sigma\mex_2(n)-E_2(n).
\]
This completes the proof.
\end{proof}

\begin{cor}\label{cor.main.identities}
For all $n\geq 0$, we have
\[
\cpsi_{2,0}(n)=2\sigma\mex_2(n)
\]
and
\[
\cpsi_{2,1}(n)=2\sigma\mex_2(n)-E_2(n).
\]
\end{cor}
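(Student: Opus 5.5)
The corollary follows by combining Theorem \ref{thm.cpsi.p2} with Theorem \ref{thm.sigma.p2}: both express the same $p_2$-sums, one in terms of colored Frobenius partitions and the other in terms of minimal excludants. So the plan is to chain the two identities for each fixed $n\geq 0$.

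First, Theorem \ref{thm.cpsi.p2} gives $\cpsi_{2,0}(n)=\sum_{d\in\mathbb Z}p_2(n-d(d+1))$. The first identity of Theorem \ref{thm.sigma.p2} says that this same sum equals $2\sigma\mex_2(n)$. Equating the two yields $\cpsi_{2,0}(n)=2\sigma\mex_2(n)$.

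Second, Theorem \ref{thm.cpsi.p2} gives $\cpsi_{2,1}(n)=\sum_{d\in\mathbb Z}p_2(n-d^2)$. The second identity of Theorem \ref{thm.sigma.p2} identifies this sum with $2\sigma\mex_2(n)-E_2(n)$, which gives the second claim.

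The only point to check is that both theorems use the same range $d\in\mathbb Z$ and the same convention $p_2(m)=0$ for $m<0$, so that the sums are literally identical; both were stated with these conventions. There is no real obstacle. If a purely bijective proof is wanted, compose the bijection of Theorem \ref{thm.cpsi.p2}, which sends $\Pi$ to a pair $(d,(\lambda_0,\lambda_1))$, with the maps $\varphi_{n,d}$ or $\psi^{\pm}_{n,k}$. This sends each Frobenius array to a pair (index, bipartition of $n$), and the fiber over a bipartition $\pi$ has size $2\mex_2(\pi)$, respectively $2\mex_2(\pi)-\mathbf{1}_{\{\mex(\pi_1)=\mex(\pi_2)\}}$.
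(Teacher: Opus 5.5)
Your proposal is correct and is the same as the paper's proof: the corollary follows by equating the right-hand sides of Theorems \ref{thm.cpsi.p2} and \ref{thm.sigma.p2}, which are literally the same $p_2$-sums. Your optional bijective remark is also valid; it just unpacks those two theorems by composing the Durfee-rectangle bijection with $\varphi_{n,d}$ or $\psi^{\pm}_{n,k}$ and counting fibers.
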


\begin{proof}
The first identity follows by comparing the first identities in Theorems \ref{thm.cpsi.p2} and \ref{thm.sigma.p2}.  The second identity follows by comparing the second identities in Theorems \ref{thm.cpsi.p2} and \ref{thm.sigma.p2}.
\end{proof}


\begin{thebibliography}{99}

\bibitem{AndrewsPartitions}
G. E. Andrews, \textit{The Theory of Partitions}, Cambridge Mathematical Library, Cambridge University Press, Cambridge, 1998.

\bibitem{AndrewsGFP}
G. E. Andrews, \textit{Generalized Frobenius Partitions}, Memoirs of the American Mathematical Society, Vol. 49, No. 301, American Mathematical Society, Providence, RI, 1984.

\bibitem{AndrewsNewman}
G. E. Andrews and D. Newman, Partitions and the minimal excludant, \textit{Annals of Combinatorics} \textbf{23} (2019), 249--254.

\bibitem{BallantineMercaCombinatorial}
C. Ballantine and M. Merca, Combinatorial proof of the minimal excludant theorem, \textit{arXiv:1908.06789} (2020).

\bibitem{BallantineMerca}
C. Ballantine and M. Merca, Bisected theta series, least $r$-gaps in partitions, and polygonal numbers, \textit{Ramanujan Journal} \textbf{52} (2020), 433--444.

\bibitem{JiangRolenWoodbury}
Y. Jiang, L. Rolen and M. Woodbury, Generalized Frobenius partitions, Motzkin paths, and Jacobi forms, \textit{Journal of Combinatorial Theory, Series A} \textbf{190} (2022), Article 105633.

\bibitem{LinLiu}
B. L. S. Lin and H. Liu, The minimal excludant of bipartitions, \textit{Revista de la Real Academia de Ciencias Exactas, F\'{i}sicas y Naturales. Serie A. Matem\'{a}ticas} \textbf{120} (2026), Article 18.

\end{thebibliography}
\end{document}